\def\l@section{\@tocline{1}{10pt plus0pt}{0pt}{}{\bfseries}}
\def\@tocline#1#2#3#4#5#6#7{\relax
    \ifnum #1>-1
  \ifnum #1>\c@tocdepth 
  \else
    \par \addpenalty\@secpenalty
    \begingroup \hyphenpenalty\@M
    \@ifempty{#4}{%
      \@tempdima\csname r@tocindent\number#1\endcsname\relax
    }{%
      \@tempdima#4\relax
    }%
    \parindent\z@ \leftskip#3\relax \advance\leftskip\@tempdima\relax
    \rightskip\@pnumwidth plus4em \parfillskip-\@pnumwidth
    #5\leavevmode\hskip-\@tempdima #6\nobreak\relax
    \hfil\hbox to\@pnumwidth{\@tocpagenum{#7}}\par
    \nobreak
    \endgroup
  \fi
\fi}
\def\be#1{ \begin{equation}\label{#1} }
\def\bas{\begin{align*}}
\def\eas{\end{align*}}
\def\bi{\begin{itemize}}
\def\ei{\end{itemize}}
\def\emph#1{{\it #1}}
\def\textbf#1{{\bf #1}}
\theoremstyle{plain}
 \theoremstyle{plain}
  \newtheorem{theorem}{Theorem}
  \numberwithin{theorem}{section}
  \newtheorem{fact}[subsection]{Fact}
  \newtheorem{lemma}{Lemma}
  \numberwithin{notation}{section}
  \numberwithin{lemma}{section}
   \numberwithin{corollary}{section}
\theoremstyle{remark}
  \newtheorem{remark}{Remark}
  \numberwithin{remark}{section}
\theoremstyle{definition}
\numberwithin{definition}{section}
\begin{document}
\include{psfig}

\setcounter{tocdepth}{2}


\title{A short proof of Kahn-Kalai conjecture}
\pagenumbering{arabic}

\author{Phuc Tran, Van Vu }
\thanks{Department of Mathematics, Yale University, 219 Prospect Ave, New Haven, Connecticut 06511, USA \\
  \textit{Email:} \texttt{phuc.tran@yale.edu, van.vu@yale.edu}.}
\date{}
\maketitle


\begin{abstract}
  In a recent paper,  Park and Pham famously proved Kahn-Kalai conjecture.
In this note, we simplify their proof, using an induction to replace the original analysis. This 
 reduces the proof to one page and from the argument it is also easy to read that one can set the constant $K$ in the conjecture to $\approx 3.998$, which 
 could be the best value under the current method.  Our argument also applies to the $\epsilon$-version of the Park-Pham result, studied by Bell. 
\end{abstract}

\section{Introduction}

Let $X$ be a set of $N$ atoms (elements) and $0  \le p \le 1$. The measure $\mu_p$ is defined on the subsets of $X$ by $\mu_p (S)= p^{|S|} (1-p) ^{N- |S| }$. 
This is the measure generated by choosing each atom from  $X$ independently with probability $p$. 
For a family $F$ of subsets, let $\mu_p (F) :=\sum_{S \in F} \mu_p (S)$. Furthermore, let $ \textbf{E}_p (F )= \sum_{S \in F} p^{| S|}$ be the expectation for the number of elements of 
$F$ in the set of chosen atoms. 

A family $G$ of subsets is  \textit{increasing}  if $ A \in G$ and $A \subset B$ then $B \in G$.
Given a family $F$, let $\langle F \rangle $ be the collection of subsets of $X$ which contain some element of $F$, namely 
$ \langle F \rangle := \{ T:   T \supset S, S \in F \} . $ 
It is trivial that $\langle F \rangle$ is an increasing family for any $F$. We say that $G$ {\it covers}  $F$ if $F \subset \langle G \rangle$. 

Let  $p_c (F)$ be the value where $\mu_{p_c} (\langle F \rangle ) =1/2$. Furthermore,  for a family $G$, let  $p_E (G)$ be the value where $\textbf{E}_{p_E}  (G) =1/2$. Now let 
$q(F) := \max \left\lbrace p_E (G) \mid G \,\, \text{covers} \,\,  F \right\rbrace $. It is clear that 
\begin{equation}
 p_E (F) \le q(F) \le p_c (F) . 
\end{equation}
Finally, we say that $F$ is $l$-bounded if its elements have size at most $l$.

\vskip2mm

{\noindent \it Example.} Let  $X$ be the set of all edges of the complete graph $K_n$ on a set $V$ of $n$ vertices. Thus, $|X|= N = {n \choose 2}$ and each subset $S \subset X$ corresponds to a graph on $V$. For  $0 \leq p \leq 1$,  $\mu_p$ is the measure of the random graph $G(n,p)$. Let $F$ be the collection of Hamiltonian cycles on $V$. Then $F$ is  $n$-bounded and $|F|= (n-1)!$. It follows that   $\textbf{E}_p (F) = \frac{(n-1)! p^n}{2}$; this is the expectation of the number of Hamiltonian cycles in $G(n,p)$. By definition,  $p_c (F)$ is the critical value where  $G(n,p)$ contains a Hamiltonian cycle with probability $1/2$. 
In random graph theory, $p_c(F)$ is referred to as the {\it threshold } for the appearance of a Hamiltonian cycle.

 By Stirling's formula,  it is easy to show that $q(F) \ge  p_E (F)= (1+o(1)) \frac{e}{n} $. The computation of $p_c (F)$ is harder, and classical theorems in random graph theory show that  
 $$p_c (F)=  (1+o(1)) \frac{\ln n}{n}= (\ln 2 +o(1)) \frac{\log n}{n} \approx .693 \frac{\log n}{n} , $$  where (following tradition) we assume that $\log x$ has base 2. 

 \vskip2mm 
 
 Kahn and Kalai \cite{KK1} conjectured that there is a constant $K$ such that for any \textit{increasing} family $F$, 
 $p_c (F) \le K q(F) \log n $.  This was the central question in random graph theory for many years. 
 In 2021, a  weaker version of this conjecture was proved by  Frankston, Kahn, Narayanan and 
  Park \cite{KP}, inspired by an exciting development by Alweiss, Schachar, Wu, and Zhang \cite{Lovett}. This version already contained the
 most interesting applications in the literature.  A year later, Park and Pham \cite{PP}, also using ideas from \cite{Lovett},  settled the conjecture in the following (stronger) form.

\begin{theorem}[Park-Pham \cite{PP}] \label{Theorem1} 
There is a constant $K >0$ such that for any $l$-bounded  increasing family $F, $  $p_c (F) \le K q (F) \log (l+1). $ 

\end{theorem}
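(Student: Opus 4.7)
My plan is to prove the theorem by induction on the parameter $l$. The inductive hypothesis is the stronger statement that $\mu_{Kq\log l}(\langle F\rangle)\ge 1/2$ for every $l$-bounded family $F$ and every $q\ge q(F)$, where $K$ is a universal constant to be fixed (the abstract suggests $K\approx 3.998$). The base case $l=1$ is immediate: the cover of $F$ achieving $q$ may be taken to be a subfamily of singletons of size $\le 1/(2q)$, so $\mu_{Kq}(\langle F\rangle)\ge 1-e^{-K/2}\ge 1/2$ for any $K\ge 2\ln 2$.

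For the inductive step, fix an $l$-bounded $F$ with cover $G$ satisfying $\sum_{S\in G}q^{|S|}\le 1/2$, and split the random set $R\sim\mu_p$ at $p=Kq\log l$ into two independent pieces $R_1\sim\mu_{Kq}$ and $R_2\sim\mu_{p'}$ with $p'$ chosen so that $1-(1-Kq)(1-p')=p$, giving $p'\le Kq\log(l/2)+O(q^2)$. The core step is to establish the \emph{fragmentation property}: with probability at least $c>1/2$ over $R_1$, either $R_1$ already lies in $\langle F\rangle$, or there is an $\lceil l/2\rceil$-bounded family $\tilde F\subseteq\{S\setminus R_1 : S\in F\}$ with $q(\tilde F)\le q$ such that $R_2\in\langle\tilde F\rangle$ forces $R\in\langle F\rangle$. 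Granted this, applying the inductive hypothesis to $\tilde F$ gives $\Pr_{R_2}[R_2\in\langle\tilde F\rangle]\ge 1/2$, and so $\mu_p(\langle F\rangle)\ge c/2$; choosing $c>1/\sqrt{2}$ and combining two independent copies of the construction then lifts $c/2$ to $\ge 1/2$, at the cost of a constant factor in $K$ which is absorbed into the overall constant.

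The main obstacle is the fragmentation property, since the natural candidate $\tilde G=\{S\setminus R_1:S\in G,\ |S\setminus R_1|\le l/2\}$ for the cover of $\tilde F$ has expected $q$-weight $\E_{R_1}\sum q^{|S\setminus R_1|}$ bounded by $\sum_{S\in G}(Kq+(1-Kq)q)^{|S|}$, which blows up by an exponential factor in $|S|$ and defeats any direct Markov-type argument. The rescue is to restrict $\tilde G$ further to those $S$ whose intersection $|S\cap R_1|$ lies in a Chernoff-typical window; the atypical sets contribute a factor exponentially small in $|S|$, balancing the combinatorial explosion against the Chernoff tail, while the spread condition $\sum_{S\in G}q^{|S|}\le 1/2$ keeps the surviving weight controlled. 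Carefully tuning this window together with the constant $K$ is what yields the explicit value $K\approx 3.998$, and represents the step where the authors' inductive framework streamlines the direct multi-scale analysis of Park-Pham into a single Markov-plus-Chernoff estimate driven by the inductive hypothesis at the smaller scale $l/2$.
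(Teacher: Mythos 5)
Your high-level skeleton --- induct on $l$, split the random set into two independent rounds $R_1$ and $R_2$, pass a residual family at a constant fraction of $l$ to the inductive hypothesis --- does match the paper's strategy. But two essential pieces are missing or wrong, and they are not cosmetic.

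First, the target probability. Your inductive hypothesis is $\mu_{Kq\log l}(\langle F\rangle)\ge 1/2$ with a fixed $K$, and in the inductive step you land at $c/2<1/2$ and propose to ``combine two independent copies'' to lift it back to $1/2$. That boost replaces $p$ by roughly $2p$, i.e.\ $K$ by $2K$ --- but the inductive hypothesis you invoked at scale $\lceil l/2\rceil$ uses $K$, not $2K$, so the induction does not close. If instead you let the constant grow as $K_l=2K_{l/2}$, then over the $\Theta(\log l)$ scales you pay $2^{\Theta(\log l)}=\mathrm{poly}(l)$, which is not a constant. The paper sidesteps this entirely: it does not insist on a fixed target probability, but proves a strengthened statement (its Theorem \ref{main3}) in which the hypothesis is $f(H)\ge 1/2-2^{-(l+2)}$ and the conclusion is coverage of a $2/3+2^{-(l+2)}$ fraction of the $m_l$-level. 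The geometric margin $2^{-(l+2)}$ is exactly calibrated so that subtracting the cost of the bad event at scale $l$ still leaves the hypothesis valid at scale $l_1=\lfloor 0.9l\rfloor<l$, with no boosting needed.

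Second, the ``fragmentation property,'' which you correctly identify as the crux, is not actually established by the Markov-plus-Chernoff plan you sketch, and the mechanism in the paper is different. Your candidate $\tilde G=\{S\setminus R_1: S\in G,\ |S\setminus R_1|\le l/2\}$ fails for a structural reason beyond the blow-up you note: for typical $R_1$ of density $Kq\ll 1/2$, the residual $|S\setminus R_1|\approx(1-Kq)|S|\approx|S|$, so restricting to a Chernoff-typical window for $|S\cap R_1|$ does not produce $\lceil l/2\rceil$-boundedness at all. The paper does not try to show that the residuals of $F$ are small. Instead, after removing $W$ it passes to the \emph{minimal} sets $H'_W$ of $\{S\setminus W:S\in H\}$, splits $H'_W$ by size into small pieces $\tilde H_W$ (which are $\lfloor 0.9l\rfloor$-bounded \emph{by definition}) and large pieces $G_W$, and then proves --- not by Chernoff, but by the double-counting lemma (Lemma \ref{DCL}) --- that $\sum_W\mu_p(\langle G_W\rangle)$ is tiny. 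The counting is driven by minimality: fixing $U=W\cup S'$, the canonical $T\in H$ inside $U$ satisfies $T\setminus W=S'$, so $S'\subseteq T$ and there are at most $\binom{l}{k}$ choices, and the resulting bound $\sum_{k>0.9l}\binom{N}{w+k}\binom{l}{k}p^k\le\binom{N}{w}\cdot(0.1L)^{-0.9l}2^{l+1}$ is exponentially small in $l$ for $L$ large. Then subadditivity of the cover cost gives $f(\tilde H_W)\ge f(H)-f(G_W)$, which is where the margin from point one gets consumed. Without the minimal-set reduction and the double-counting lemma, or some replacement for them, the fragmentation step in your outline has no proof.
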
 

This note grew out of our attempt to teach Theorem \ref{Theorem1} in class. We found a short argument,  using induction, avoiding the relatively technical analysis in \cite{PP}.  This simplifies the proof of the main result of  \cite{PP} (Theorem \ref{main2} below)  and reduces its length 
to about one page. Other ingredients remain the same.  

Our proof also reveals that when $l \rightarrow \infty$ (which is the interesting case in applications), we can set  $K \approx  3.998$, which we believe to be the best constant with respect to the current approach.  
The previous record is $ K=8$, see  \cite{Bell}. 

\section{The covering theorem}

 In the power set $2^X$, the $m$-level, denoted by $L_m$,  is the family of all subsets of size $m$. Clearly 
$ |L_m| = {N \choose m } . $ For a family $F$ (not necessarily increasing), let  $f_p(F)$ be  the minimum expectation (with respect to $p$) of a family $G$ such that $F \subset  \langle G \rangle  $. For each given $0< p \leq 1$, we refer $f_p(F)$ as a cost function of covering $F$. A short consideration shows that if we set $p= q_F$, defined in the previous section, then $f_p (F) = f_{q_F}(F)=1/2$. 

An important property of the cost function is sub-additivity. If we partition  $F$ into $F_1 $ and $F_2$, then it is easy to see that $f_p(F) \le f_p(F_1)+ f_p(F_2) $. Furthermore, if $F$ is empty, then $f_p(F)=0$, and  if $F$ contains the empty set, then $f_p(F)=1$.  

For an increasing family $G$ (e.g. $\langle F \rangle$), let $c_t(G) = \frac{ |G \cap L_t | } { | L_t| } $ be the fraction of level $t$ in $G$. By double counting, it is easy to show that
\begin{fact} \label{doublecount}  $c_t(G)$ is increasing with $t$.  \end{fact}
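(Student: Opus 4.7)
The plan is to prove that $c_t \le c_{t+1}$ for each admissible $t$ by a standard two-way count of incidences between consecutive levels. Set
$$\CP_t = \{(A,B) : A \in F \cap L_t,\; B \in L_{t+1},\; A \subset B\}$$
and compute $|\CP_t|$ in two ways, exploiting that $F$ is increasing on each side of the count.

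Counting with $A$ fixed first: for each $A \in F \cap L_t$, every $(t+1)$-element superset $B \supset A$ automatically lies in $F$ (by the increasing property), and there are exactly $N-t$ such supersets in $L_{t+1}$. Hence $|\CP_t| = |F \cap L_t|(N-t)$. Counting with $B$ fixed first: any $B$ that contributes a pair must contain some member of $F$, hence $B \in F \cap L_{t+1}$ (again by monotonicity); each such $B$ has exactly $t+1$ subsets of size $t$, so it contributes at most $t+1$ pairs. Therefore $|\CP_t| \le |F \cap L_{t+1}|(t+1)$.

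Combining the two bounds gives
$$|F \cap L_t|\,(N-t) \;\le\; |F \cap L_{t+1}|\,(t+1).$$
Since $\binom{N}{t+1}/\binom{N}{t} = (N-t)/(t+1)$, dividing both sides by $\binom{N}{t}(t+1)$ rearranges this inequality into exactly $c_t \le c_{t+1}$, which is the claim.

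There is no real obstacle here: this is a classical LYM-style incidence count, and the only subtlety is invoking the increasing hypothesis twice — once to saturate the count from the $A$-side (each $(t+1)$-superset of an $A \in F$ is forced into $F$), and once to restrict the $B$-side to $F \cap L_{t+1}$. All normalization constants then align automatically through the identity $\binom{N}{t}(N-t) = \binom{N}{t+1}(t+1)$.
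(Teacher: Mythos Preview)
Your proof is correct and is exactly the double-counting argument the paper alludes to; the paper itself only writes ``By double counting, it is easy to show'' and supplies no further details. One harmless redundancy: on the $A$-side the observation that each superset $B$ lands in $F$ plays no role, since your set $\CP_t$ only requires $B\in L_{t+1}$ --- the increasing hypothesis is really needed just once, on the $B$-side.
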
 

Set $m_{l}: = \lfloor LpN \log (l+1) \rfloor$ where $L$ is a sufficiently large constant. The  main result of \cite{PP} is the following 

\begin{theorem}[Park-Pham \cite{PP}]  \label{main2} There is a constant $L$ such that the following holds. 
Assume that $F$ is $l$-bounded and $p$ is chosen so that  $f_p(F) \ge 1/2 $. Then $\langle F \rangle $ contains at least $ \frac{2}{3}$ fraction of  the $m_l $-level ($L_{m_l }$).  In other words 
$$ | \langle F \rangle \cap L_{m_l}   |  \ge \frac{2}{3}   |L_{m_l } |  . $$

\end{theorem}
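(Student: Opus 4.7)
I will prove Theorem~\ref{main2} by induction on $l$. In the base case $l = 1$, $H$ is a family of singletons $\{x_\alpha\}$ (if $\emptyset \in H$ the claim is trivial), and the definition of $f$ forces $p|U| \ge 1/2$ for $U := \bigcup_\alpha \{x_\alpha\}$. A uniform $m_1$-set misses $U$ with probability at most $(1-|U|/N)^{m_1} \le 2^{-L/2}$, which is $\le 1/3$ for $L$ sufficiently large.

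For the inductive step, fix $l \ge 2$ and an $l$-bounded $H$ with $f(H)\ge 1/2$; set $l' := \lceil l/2 \rceil$ and $k := m_l - m_{l'}$. Realize a uniform $m_l$-set $W$ as $W = T \sqcup W'$, where $T \subset X$ is a uniform $k$-set and, conditional on $T$, $W' \subset X \setminus T$ is uniform of size $m_{l'}$. Given $T$, partition $H = H_T^g \sqcup H_T^b$ according to whether the trace $|S\setminus T| \le l'$ (good) or not (bad), and let $H'_T := \{S \setminus T : S \in H_T^g\}$, an $l'$-bounded family on $X \setminus T$. Since any cover of $H'_T$ on $X \setminus T$ is automatically a cover of $H_T^g$ on $X$ at the same $p$-expectation, $f(H_T^g) \le f(H'_T)$; combined with sub-additivity of $f$ this gives
\[
f(H'_T) \;\ge\; f(H) - f(H_T^b) \;\ge\; \tfrac12 - f(H_T^b).
\]

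The analytic crux is to establish $\E_T[f(H_T^b)] \le \eta(L)$ with $\eta(L) \to 0$ as $L \to \infty$. Fix a near-optimal cover $G$ of $H$ with witnesses $g_S \in G$, $g_S \subseteq S$ for each $S \in H$. For each $g \in G$, build a cover of the bad $S$'s with $g_S = g$ by extending $g$ by the small portion of $S \setminus g$ that escapes $T$; the extra $p$-weight is controlled by a hypergeometric tail estimate on $|(S\setminus g) \cap T|$, whose strength is governed by the slack factor $L$ in $|T|$. Averaging over $T$ yields the bound. By Markov, with probability $\ge 1 - \sqrt{\eta}$ over $T$ we have $f(H'_T) \ge 1/2 - \sqrt{\eta}$, and a slightly strengthened inductive hypothesis (with threshold $c$ just under $1/2$) applied to the $l'$-bounded $H'_T$ yields $\Pr[W' \in \langle H'_T \rangle \mid T] \ge 2/3$. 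Averaging over $T$ delivers $\Pr[W \in \langle H \rangle] \ge 2/3$.

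\textbf{Main obstacle.} The induction does not close at $c = 1/2$ on the nose: the sub-additivity loss $f(H_T^b) > 0$ forces the invocation of the hypothesis on $H'_T$ at a slightly degraded threshold $\approx 1/2 - \sqrt{\eta}$. The remedy is to carry a two-parameter version of the theorem (with $f \ge c$ for $c$ varying just below $1/2$, and a correspondingly calibrated conclusion) through all $\log(l+1)$ levels so that the errors do not accumulate. Moreover, the right choice of good/bad cutoff and the split ratio $|T|/|W'|$ are what make the bound $\E_T[f(H_T^b)] \le \eta(L)$ sharp enough; balancing these trade-offs over the $\log l$ iterations is exactly where the near-optimal constant $K \approx 3.998$ mentioned in the introduction emerges.
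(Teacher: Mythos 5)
Your high-level architecture matches the paper's: prove the theorem by induction on $l$, delete a random set $T$ of size proportional to $LpN$, split off a ``bad'' sub-problem, control it so that sub-additivity of $f$ leaves a cover cost still close to $1/2$ for an $l'$-bounded residual family, apply the inductive hypothesis on the smaller ground set, and lift back by re-adjoining $T$. That is indeed the one-page induction that replaces the analysis of Park--Pham. However, the technical heart of your argument --- the claim $\E_T\bigl[f(H_T^b)\bigr]\le \eta(L)$ with $\eta(L)\to 0$ --- is false as stated, and this is where your route and the paper's diverge in an essential way.

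Concretely, you define $H_T^b=\{S\in H:\ |S\setminus T|>l'\}$ and try to cover it cheaply. But take $H$ to be $M=\bigl\lceil\tfrac12 p^{-l}\bigr\rceil$ \emph{pairwise disjoint} sets of size $l$ in a large $X$; then $f(H)=\min\{1,\,Mp^l\}\approx 1/2$. For a uniform $T$ of size $k\approx LpN$, one has $\E\bigl[|S\cap T|\bigr]=lk/N=lLp\ll l/2$, so for almost every $T$ almost every $S$ satisfies $|S\setminus T|>l/2$, i.e.\ $H_T^b$ is essentially all of $H$. Because the $S$'s are disjoint, $f(H_T^b)=\min\{1,\,|H_T^b|\,p^l\}\approx Mp^l\approx 1/2$. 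Hence $\E_T[f(H_T^b)]\approx 1/2$ no matter how large $L$ is, and your Markov step collapses: the chain $f(H'_T)\ge f(H)-f(H_T^b)$ yields only $f(H'_T)\ge 0$. The ``near-optimal cover $G$ plus hypergeometric tail'' sketch cannot rescue this, because in the disjoint example the near-optimal cover is $G=H$ itself, the witnesses are $g_S=S$, the proposed extensions are again the full sets $S$, and you recover cost $\approx 1/2$. The family has no spreadness hypothesis to exploit, so hypergeometric concentration on $|(S\setminus g)\cap T|$ gives nothing.

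What the paper does differently, and what makes the bound go through, is the \emph{minimality} step. It passes from $H_W=\{S\setminus W: S\in H\}$ to $H'_W$, the family of \emph{minimal} elements of $H_W$, and only then splits by size into $G_W$ (minimal and $>.9l$) and $\tilde H_W$. Sub-additivity is applied to the partition $H'_W=G_W\sqcup\tilde H_W$, using $f(H'_W)=f(H_W)\ge f(H)$, and the Double Counting Lemma bounds $\sum_W\sum_{S'\in G_W}p^{|S'|}$ by exploiting that a minimal $S'\in H_W$ with $S'\subset W\cup S'$ must satisfy $S'\subset S$ for every $S\in H$ contained in $W\cup S'$. Minimality is exactly what collapses the ``bad'' contribution: in the disjoint example above, once $W$ swallows a single $S$ whole (which happens for all but an $e^{-\Theta((.1L)^l)}$ fraction of $W$), we get $\emptyset\in H_W$, so $H'_W=\{\emptyset\}$, $G_W=\emptyset$, and the bad part vanishes outright. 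Your construction never takes minimal sets, so it never sees this collapse; $H_T^b$ stays macroscopically large. To repair the argument you would need to replace the partition of $H$ by the partition of the minimal sets of $H_T$, bound the large minimal sets by a double-count rather than by a hypergeometric tail on a fixed cover, and then the rest of your outline (sub-additivity, induction on the smaller ground set, Fact~\ref{doublecount} to lift to level $m_l$) lines up with the paper.
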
 

As shown in \cite{PP}, Theorem \ref{Theorem1} follows quickly from Theorem \ref{main2}; see Remark \ref{remarkKK} for details. We will prove an (artificially) stronger variant, whose parameters are set for induction.

\begin{theorem}[Covering theorem]  \label{main3} There is a constant $L$ such that the following holds.  
Assume that $F$ is an $l$-bounded and $p$ is chosen so that  $f_p(F) \ge \frac{1}{2} - \frac{1}{2^{l+2}}  $. Then $\langle F \rangle $ contains at least  $\frac{2}{3} + \frac{1}{2^{l+2}}  $ fraction of  the $m_l $-level. 
\end{theorem}

\begin{proof} We prove Theorem \ref{main3}  by induction on $l$ and $N$.  For $l= 0$, $f_p(F) \ge 1/4  > 0$. This means that $F$ consists of the empty set, and the conclusion is trivial (for any $N$) 
as $\langle  \emptyset \rangle$ contains $L_m$ for all $0 \le m \le N $.  Now we prove for a pair $1 \le  l \le N$, assuming that the hypothesis holds for all values $l' < l$ and $N' \le  N$.  

In order to reduce the parameter $N$, we truncate the ground set $X$ and the family $F$ accordingly. 
For a set $W \subset X$, define $F_W := \{ S \backslash W: S \in F \} $. Let $F'_W$ be the family of minimal sets
(with respect to inclusion) of $F_W$ and let $G_W := \{T:  T \in F'_W, |T| > .9l \} $ and $\tilde F_W:= F'_W \backslash G_W$. The following lemma is the key estimation. 

\begin{lemma}[Double counting lemma] \label{DCL} For  $L > 1000$ and $w= \lfloor .1 LpN \rfloor$,  we have  $$\sum_{W: |W|= w }  f_p (  G_W )  \le   {N \choose w}   \frac{1}{8 \times 16^l } . $$

\end{lemma}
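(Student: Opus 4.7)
The plan is to prove the lemma by a union bound followed by a change in the order of summation (hence the ``double counting'' in the name), reducing the problem to a hypergeometric tail estimate that can be controlled since $L$ is large.

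First, for each $W$ of size $w$ the union bound gives $\mu_p(\langle G_W\rangle)\le\sum_{T\in G_W}p^{|T|}$. Since every $T\in G_W$ has the form $T=S\setminus W$ for some $S\in H$ with $|S\setminus W|>.9l$, I would bound (allowing some over-counting by the $S$'s mapping to the same $T$)
\[
\sum_{W,|W|=w}\mu_p(\langle G_W\rangle)\;\le\;\sum_W\sum_{\substack{S\in H\\|S\setminus W|>.9l}}p^{|S\setminus W|}\;=\;\sum_{S\in H}\sum_{\substack{W:\,|W|=w\\|S\setminus W|>.9l}}p^{|S\setminus W|}.
\]
For fixed $S\in H$ of size $s\le l$, I parameterize by $k=|S\cap W|$, so that $|S\setminus W|=s-k$ and the condition becomes $k<s-.9l$ (which in particular forces $s>.9l$). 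The inner sum then equals $\sum_{k<s-.9l}\binom{s}{k}\binom{N-s}{w-k}p^{s-k}$.

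Next, I would apply the elementary comparison $\binom{N-s}{w-k}/\binom{N}{w}\le(w/N)^k$, proved by a termwise match of the falling factorials $w(w-1)\cdots(w-k+1)$ against $N(N-1)\cdots(N-k+1)$ in one half and bounding the remaining quotient by $1$ in the other. Substituting $w/N\le .1Lp$ converts the inner sum into the clean form
\[
\binom{N}{w}\,p^{\,s}\sum_{k<s-.9l}\binom{s}{k}(.1L)^{k}.
\]
The key observation is that since $L>1000$ and $k\le.1l\le.1s$, the ratio of consecutive terms $(s-k)(.1L)/(k+1)$ exceeds a large constant (at least of order $L/10$), so the partial binomial sum is dominated by its top term at $k^\star\approx s-.9l\le.1l$. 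Combining $\binom{s}{k^\star}\le(es/k^\star)^{k^\star}$ with the estimate $p^{\,s}\le p^{\,.9l}$ (valid because $s>.9l$) yields, per $S\in H$, a contribution to $\sum_W\mu_p(\langle G_W\rangle)$ of order $\binom{N}{w}\bigl[C(L)\,p\bigr]^{.9l}$ for an explicit $C(L)$.

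The main obstacle is bookkeeping the constants so that the decay per $S$ is sharper than $16^{-l}/(8|H|)$, so that the overall bound $\binom{N}{w}/(8\cdot16^l)$ closes after summing over $S\in H$. The choice of split $.9l$ versus $.1l$ is exactly tuned for this: $p^{.9l}$ must beat $(.1L)^{.1l}$ under the constraint $p\le 10/L$ forced by $w\le N$, and the slack from $L>1000$ (far above the minimum) provides the geometric tail needed to swallow the factor $8$ and absorb $|H|$ via the crude antichain bound on minimal elements of $H$. Once these constants are verified, the lemma follows directly from the displayed inequalities above.
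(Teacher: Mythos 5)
Your plan diverges from the paper's at the crucial step, and the divergence introduces a gap that cannot be closed. After the union bound you swap the order of summation and write
\[
\sum_{W,|W|=w}\mu_p(\langle G_W\rangle)\;\le\;\sum_{S\in H}\;\sum_{\substack{W:\,|W|=w\\|S\setminus W|>.9l}}p^{|S\setminus W|},
\]
then estimate the inner sum for a \emph{fixed} $S\in H$. This is the wrong bookkeeping: the lemma must hold with an absolute bound $\binom{N}{w}/(8\cdot16^l)$ independent of $H$, yet your outer sum runs over all $S\in H$, and no hypothesis in the lemma (or in Theorem 2.3, where only a \emph{lower} bound on the covering cost $f(H)$ is assumed) gives any control on $|H|$. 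Your per-$S$ contribution is roughly $\binom{N}{w}(Cp)^{.9l}$, and you would need $|H|\cdot(Cp)^{.9l}\lesssim 16^{-l}$; but $|H|$ can be as large as $\binom{N}{l}$ even for an antichain, and $p$ can be of order $1/N$ (indeed that is the regime of interest), so $|H|(Cp)^{.9l}$ can vastly exceed $1$. The ``crude antichain bound on minimal elements'' you gesture at does not save this, both because you are summing over all of $H$ rather than only its minimal elements, and because even $\binom{N}{l}$ is far too large to absorb.

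The paper avoids this entirely by counting pairs $(W,S')$ with $S'\in G_W$, $|S'|=k$, \emph{without} ever enumerating $H$: fix the union $U=W\cup S'$ (there are $\binom{N}{w+k}$ choices), pick \emph{any} $S_0\in H$ with $S_0\subseteq U$, and observe that minimality of $S'$ in $H_W$ forces $S'=S_0\setminus W\subseteq S_0$, so there are at most $\binom{l}{k}$ choices of $S'$ (and hence of $W=U\setminus S'$) per $U$. This is the genuine ``double counting'' in the lemma's name; it replaces a sum over $H$ by a sum over unions $U$, which is what makes the bound $H$-independent. Your initial union bound and the elementary comparison $\binom{N-s}{w-k}/\binom{N}{w}\le(w/N)^k$ are correct in isolation, but the overall structure needs to be reorganized along these lines before the constants can possibly close.
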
 

\begin{proof}
By the definition of $f$,  $f_p (  G_W  ) \le \textbf{E}_p(G_W)=\sum_{S' \in G_W} p^{| S'| } . $ Therefore,
\begin{equation} \label{sum1} \sum_{W: |W|= w }  f_p (   G_W  )  \le \sum_{W: |W|=w} \sum_k \sum_{S': S' \in G_W , |S'| =k} p^k.  
\end{equation} 
\vskip2mm 
To bound the RHS, we bound the number of pairs $(W,S')$, in which $W$ is a set of size $w$ and $S' \in G_W$ has exactly $k$ elements. To determine $(W,S')$, we first fix the union $W'=W \cup S'$ and then choose $S' \in G_{W' \setminus S'}: |S'|=k$. There are ${N \choose {w+k}  } $ ways to choose $W'$. Once this 
union is fixed,  pick any set $S \in F$ inside the union (there must be at least one such $S$, namely, the one that defines $S'$).
For each possible $S' \subset W'$ (i.e. $W= W' \setminus S', S' \in G_{W}, |S'|=k$), if $S' \not\subset S$, then $S' \cap S = S \setminus W$ is minimal. It contradicts the fact that $S' \in G_W$ is a minimal set. Thus, $S'$ must be a subset of $S$ (regardless of the choice of $S$.) 
This simple, but delicate, point has an amazing impact, as it leads to a very good bound for the number of pairs $(W, S')$. Indeed, after choosing  $S$ (which has size at most $l$),  there are at most   ${ l \choose k }$ choices of $S'$. 

\vskip2mm 

By the definition of $G_W$,  $k > .9l$. Therefore, with $w =\lfloor .1 LpN \rfloor $, the RHS of \eqref{sum1} is at most 
\begin{equation} \label{sum2} \sum_{ .9l <k \le l}  {N \choose{w+k}} { l \choose k}  p^k  \le {N \choose w} \sum_{ .9l < k \le l } (.1 L)^{-k} { l \choose k}  \le {N \choose w} (.1 L)^{-.9l} 2^{l+1}  \le \frac {{N \choose w}}  { 8 \times 16^l }, 
\end{equation} 
given that  $L \ge  1000$. This proves the lemma. \end{proof}

Back to Theorem \ref{main3}, we say that $W$ is {\it good}  if $f _p ( G_W  )  \le \frac{1} { 2 ^{l+2} } $. By averaging, at most an $\frac{1}{2 \times 8^l} $ fraction of all $W$ are \textit{bad}. 
For a good $W$, by subadditivity, we have 
$$f_p( \tilde F_W ) \ge f_p(F'_W) - f_p( G_W) \ge f_p(F)-  \frac{1}{2 ^{l+2} }  \ge \frac{1}{2}    - \frac{1}{2^{l+1}}  \ge  \frac{1}{2} - \frac{1}{2^{l_1 +2}}, $$ with $l_1: = \lfloor .9l \rfloor < l$.

By the induction hypothesis, $ \langle \tilde F_W \rangle$ contains $\frac{2}{3} + \frac{1}{2^{l_1+2} }  $ fraction of the  $m_{l_1} $-level   of  the ground set $X\backslash W$, $|X \backslash W|=N-w$.
By taking the union with  $W$ (for  good $W$s), it follows that $\langle F \rangle$ contains at least 
$$\left(\frac{2}{3} + \frac{1}{2^{l_1 +2}} \right)\left(1-\frac{1}{2 \times 8^{l} } \right) \geq   \frac{2}{3} + \frac{1}{2^{l_1 +2}}  -  \frac{1}{2 \times 8^{l} }  \ge  \frac{2}{3}  + \frac{1}{2^{l+2} } $$  fraction of  the $(m_{l_1}  + w) $-level of $X$, in which 
$$ m_{l_1}  + w  = \lfloor L p(N-w) \log (l_1+1) \rfloor + \lfloor .1L  pN \rfloor  \le \lfloor L pN \log (l+1) \rfloor  = m_l,$$  given that $L \ge 1000$. 
Since $\langle F \rangle$ is an increasing family, by Fact \ref{doublecount}, our proof is complete. \end{proof} 

\vskip2mm 

\begin{remark} \label{remarkKK} One can easily deduce the Kahn-Kalai conjecture from the main theorem, with $K = L(1+ \epsilon)$, for any fixed $\epsilon >0$. Let us consider $G(n,p)$, the argument for random hypergraphs is similar. Set $N = {n \choose 2} $. Notice that if we choose each edge with probability $p= \rho (1+\epsilon)$ where $pN  \rightarrow \infty$,  then with probability $1-o(1)$, the resulting graph has 
at least $m= \rho N$ edges. Thus, we can generate $G(n,p)$ (barring an event of probability $o(1)$) by first generating a random number $\bar m$ of value at least $m$ (according to the binomial distribution, but this does not really matter), then hitting a uniform random point on the $\bar m$ level, and taking the corresponding graph.  Consequently, if we set $\rho=q_F$ and $\bar{m} > K q_F N \log(l+1)$, then with probability at least $2/3- o(1)$, we hit a point in $\left\langle F \right\rangle$. \end{remark}

\section{Reducing the constant $K$}\label{KK}

In this section, we show that when $l \rightarrow \infty$, we can reduce $L$ to approximately $3.998$, and then $K$ (by Remark \ref{remarkKK}) also to approximately $3.998$.  This seems to be the best value with respect to the current method.

Let $0< \delta < 1$ be a constant  and set $L$  (with foresight)  such that  $$3 > \epsilon = \frac{(L \log (1/\delta))^\delta}{2}-1 >0. $$ 
The smallest value for $L$ so that this holds for some $1 >\delta >0$ is 
$L \approx 3.998 \dots$,  which is slightly larger than the minimal value of  $ \frac{2^{1/\delta}}{ \log (1/\delta) }$  for $\delta \in (0,1)$.

Let  $l_0  $ be a natural number such that $2 \leq (1+\epsilon/3)^{l_0 -\lfloor \delta l_0 \rfloor}$. Set $m_l= \lfloor L pN \log (l+1) + 1000 pN \log (l_0+1) \rfloor. $  
With a small modification, we prove the following $\epsilon$-version of Theorem \ref{main3}.
\begin{theorem}  \label{main4} Let $l_0 \le l \le N$ be integers and $X$ be a  set of size $N$. 
Assume that $F$ is $l$-bounded and $p$ is such that $f_p(F) \ge \frac{1}{2} - (1+\epsilon/3)^{-l} $. Then $\langle F \rangle $ contains at least a $\frac{2}{3} + (1+ \epsilon/3)^{-l}$ fraction of the $m_l$-level.
\end{theorem}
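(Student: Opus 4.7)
I would mimic the inductive proof of Theorem \ref{main3} almost line by line, substituting three parameters. The splitting ratio $0.9$ becomes $\delta$, so $l_1 := \lfloor \delta l \rfloor$; the cut-off mass $2^{-(l+2)}$ becomes $(1+\epsilon/3)^{-l}$; and the restriction size $w$ changes from $\lfloor 0.1 L p N \rfloor$ to approximately $\lfloor L p N \log(1/\delta) \rfloor$, which is what is needed so that $m_{l_1}(N-w)+w \le m_l(N)$. Given $W \subset X$ of size $w$, define $H_W$, $H'_W$, $G_W = \{S' \in H'_W : |S'| > \delta l\}$ and $\tilde H_W = H'_W \setminus G_W$ exactly as before, and induct on the pair $(l,N)$ for $l \ge l_0$. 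When the recursion produces $l_1 < l_0$, the extra summand $1000\,pN\log(l_0+1)$ built into $m_l$ provides enough slack to apply Theorem \ref{main3} directly to $\tilde H_W$, which handles the base case.

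The new double counting analog of Lemma \ref{DCL} is the technical heart. Starting from
$$ \sum_{|W|=w} \mu_p(\langle G_W \rangle) \le \sum_{|W|=w} \sum_{k > \delta l} {N \choose w+k}{l \choose k} p^k, $$
and using ${N \choose w+k}/{N \choose w} \le (N/w)^k$, the choice $w \approx L p N \log(1/\delta)$ gives $(Np/w)^k \le (L \log(1/\delta))^{-\delta l}$ for $k > \delta l$; combined with $\sum_k {l \choose k} \le 2^l$ this produces the bound ${N \choose w}\bigl(2/(L \log(1/\delta))^{\delta}\bigr)^{l} = {N \choose w}(1+\epsilon)^{-l}$, by the defining identity $(L \log(1/\delta))^\delta = 2(1+\epsilon)$.

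The rest would be mechanical. Call $W$ good when $\mu_p(\langle G_W \rangle) \le (1+\epsilon/3)^{-l}$; Markov then shows the bad fraction of $W$'s is at most $\bigl((1+\epsilon/3)/(1+\epsilon)\bigr)^l$. For good $W$, sub-additivity combined with the choice of $l_0$ (which ensures $(1+\epsilon/3)^{l-l_1} \ge 2$) yields $f(\tilde H_W) \ge 1/2 - (1+\epsilon/3)^{-l_1}$, so the induction hypothesis covers a $2/3 + (1+\epsilon/3)^{-l_1}$ fraction of the $m_{l_1}$-level of $X \setminus W$. Unioning back with $W$, subtracting the bad-$W$ loss, and invoking Fact \ref{doublecount} then produces the desired $2/3 + (1+\epsilon/3)^{-l}$ at level $m_{l_1}+w \le m_l$.

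The main obstacle, and the reason this scheme bottoms out at $L \approx 3.998$, is the joint tradeoff in the double counting: $L$ and $\delta$ must satisfy $(L \log(1/\delta))^\delta > 2$, so the smallest admissible $L$ is the minimum of $2^{1/\delta}/\log(1/\delta)$ over $\delta \in (0,1)$. Keeping the bookkeeping clean at this boundary, in particular making $\epsilon$ strictly positive so that the bad-$W$ mass $(1+\epsilon)^{-l}$ is absorbed by the gap to the threshold $(1+\epsilon/3)^{-l}$, is what forces both $\epsilon < 3$ (the inequality $(1+\epsilon) \ge (1+\epsilon/3)^2$ appears here) and the specific choice of $l_0$ linking $\delta$ and $\epsilon$.
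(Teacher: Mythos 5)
Your proposal is correct and follows essentially the same route as the paper: you replace $0.9$ by $\delta$, change $w$ to $\lfloor Lc pN\rfloor$ with $c = \log(1/\delta)$, and replace the cut-off mass $2^{-(l+2)}$ by $(1+\epsilon/3)^{-l}$, then carry through the identical double-counting and subadditivity steps. The one place the bookkeeping differs slightly is cosmetic: you bound the bad fraction of $W$'s by $\bigl((1+\epsilon/3)/(1+\epsilon)\bigr)^l$ before invoking $\epsilon<3$, while the paper first writes the double-counting bound as $(1+\epsilon/3)^{-2l}$ and then divides by the threshold; both give the same estimate. Everything else — the choice of $l_0$ to guarantee $(1+\epsilon/3)^{l-l_1}\ge 2$, the union with $W$, the level arithmetic $m_{l_1}+w\le m_l$ via Fact~\ref{doublecount}, and the base case deferral to Theorem~\ref{main3} absorbed by the $1000\,pN\log(l_0+1)$ term — matches the paper's argument, including the parts the paper treats informally (the paper writes ``it is easy to check'' for the level arithmetic and gives the base case as a one-line remark, and your gloss is at the same level of detail).
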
 
\begin{proof}

We start the induction at $l = l_0$. This base case is covered by Theorem \ref{main3} and 
results in the term $1000 pN \log (l_0+1)$.
Next, we replace the constant $.9$ (in the previous proof) by $\delta$ and consider $w:= \lfloor LcpN \rfloor$ with $c := \log (1/\delta)$. Thus, in \eqref{sum2},  instead of 
$\sum_{ .9l < k \le l } (.1 L)^{-k} { l \choose k} $, we end up with 

\begin{equation}
\begin{split}
\sum_{W: |W|= w }  f_p (  G_W  )  & \le \binom{N}{w} \sum_{ \delta l < k \le l } (Lc)^{-k} { l \choose k} \\
& \leq \binom{N}{w} (Lc)^{-l \delta} \sum_{ \delta l < k \le l } { l \choose k} \\
&  \le \binom{N}{w} (Lc)^{-l \delta} 2^l  = \binom{N}{w} \left[ \frac{(Lc)^{\delta}}{2} \right]^{-l}  \\
& = \binom{N}{w} (1 + \epsilon)^{-l} \,\,\,(\text{by Definition of $L$ and $\epsilon$ })  \\
& \le \binom{N}{w} (1 +\epsilon/3)^{-l} (1+\epsilon/3)^{-l} \,\,\, (\text{since \,  $\epsilon < 3$ }).
\end{split}
\end{equation}

We  say that $W$ is {\it good}  if $f_p (  G_W   ) \le (1+ \epsilon/3)^{-l}  $. By averaging, at most an $(1+ \epsilon/3 )^{-l}  $ fraction of all $W$ are \textit{bad}. 
For a good $W$, by subadditivity, we have 
$$f_p(\tilde{F}_W) \geq f_p(F'_W) -f_p(G_W) \geq f_p(F) - (1+\epsilon/3)^{-l} \geq \frac{1}{2}- 2(1+\epsilon/3)^{-l} \geq \frac{1}{2}- (1+\epsilon/3)^{-l_1},$$
with $l_1=\lfloor \delta l \rfloor$, thanks to the assumption  $2 \leq (1+\epsilon/3)^{l_0 -\lfloor \delta l_0 \rfloor}.$ 
By applying the induction hypothesis for $l_1$ and taking union with $W$ (for the good $W$), it follows that $\left\langle F \right\rangle$ contains at least
$$\frac{2}{3}+ (1+\epsilon/3)^{-l_1} -(1+\epsilon/3)^{-l} \geq \frac{2}{3}+(1+\epsilon/3)^{-l},$$
 fraction of the $(m_{l_1}+w)$-level for $m_{l_1}+w= \lfloor Lp(N-w)\log (l_1+1) +1000 pN \log (l_0+1) \rfloor +w$. \\
 By the settings of $l_1$ and $c$,  it is easy to check that $m_{l_1}+w$ is at most $ m_l=\lfloor LpN \log (l+1) + 1000 pN \log (l_0+1) \rfloor. $ We complete the induction and the proof. 
\end{proof}
\section{Covering theorem for arbitrary small $\epsilon_1$}
In the previous sections, we proved that if  $f_p(F) > 1/2$, then $\left\langle F \right\rangle$ contains at least $2/3 =1- \frac{1}{3}$ fraction of the $m_l$-level, for sufficiently large $m_l$. 
In \cite{Bell}, Bell considered the question of how large should  $m_l$   be if we replace $\frac{1}{3}$ by an arbitrary $\epsilon_1 >0$. He   proved in \cite[Theorem 3]{Bell} that the covering theorem still holds 
for $ m_l= \lfloor 48 p N \log l + 48p N \log \left( \frac{1}{\epsilon_1} \right) \rfloor$. By combining our  induction with Bell's result, we can prove the following bound for sufficiently small $\epsilon_1$. 

 Define $L  \approx 3.998$ and its corresponding $l_0$  as  in the beginning of Section \ref{KK}. Set $m_l= \lfloor LpN \log(l+1) + 96 pN \log \left(\frac{1}{\epsilon_1} \right)  \rfloor$.
 
\begin{theorem}\label{main5}
Let $0< \epsilon_1 <1/l_0$ be a positive number which may depend on $N$.  Assume that $F$ is $l$-bounded and $f_p(F) > \frac{1}{2}$. Then $\left\langle F \right\rangle$ contains at least $1 - \epsilon_1$ fraction of the $m_l$-level. 
\end{theorem}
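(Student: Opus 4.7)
The plan is to combine the peel-off induction of Section \ref{KK} with Bell's covering theorem (\cite[Theorem 3]{Bell}) used as the base case. On its own, Theorem \ref{main4} only achieves coverage $2/3 + o(1)$, so driving coverage up to $1 - \epsilon_1$ for arbitrarily small $\epsilon_1$ requires an external input at the final stage. Bell's bound supplies exactly that, at the cost of a worse leading constant $48$, which is affordable precisely when Bell is invoked only once at the very end, after $l$ has been reduced to $l_0$: then Bell's $\log l$ contribution is dominated by $\log(1/\epsilon_1)$ thanks to the hypothesis $\epsilon_1 < 1/l_0$.

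Concretely, induction runs on $l$, mirroring the scheme of Theorem \ref{main4}. The inductive step for $l > l_0$ applies Lemma \ref{DCL} to produce a good $W$ of size $w = \lfloor Lc\,pN \rfloor$ with $c = \log(1/\delta)$, reducing the problem to an $l_1$-bounded family $\tilde H_W$ on $X \setminus W$ with $l_1 = \lfloor \delta l \rfloor$. The telescoping of the $w$'s over the $O(\log_{1/\delta}(l/l_0))$ rounds contributes at most $LpN\log(l+1)$ to the final level, by exactly the bookkeeping of Section \ref{KK}. The base case $l \le l_0$ invokes Bell directly and yields $1 - \epsilon_1$ coverage of a level at most $\lfloor 48 pN \log l_0 + 48 pN \log(1/\epsilon_1) \rfloor \le 96 pN \log(1/\epsilon_1)$, matching the second term in $m_l$.

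The main technical subtlety is the accounting of cumulative losses along the chain. At round $k$ one picks a threshold $\eta_k$ for $\mu_p(\langle G_W\rangle)$; Lemma \ref{DCL} bounds the bad-$W$ fraction by $(8 \cdot 16^{l_k}\eta_k)^{-1}$, while a good $W$ incurs an additive loss $\eta_k$ in $f$. Because $l$ decays geometrically and therefore $16^{-l_k}$ doubly exponentially along the chain, one can choose $\eta_k$ so that both $\sum_k \eta_k$ and the cumulative bad-$W$ fraction stay below, say, $\epsilon_1/4$. The delicate point is that Bell's theorem requires $f > 1/2$ strictly, whereas our hypothesis $f(H) > 1/2$ provides no uniform slack; one must therefore either invoke a version of Bell's argument stable under an $f \ge 1/2 - o(1)$ hypothesis (which his proof should accommodate), or build a tiny safety margin into the $\eta_k$ so that $f(\tilde H_W)$ remains genuinely above $1/2$ at the base of the induction.

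The step I expect to be the true obstacle is this last synchronisation: tuning the $\eta_k$ so that the $f$-slack, the bad-$W$ coverage loss, and Bell's final $\epsilon_1$-output all add up to give the claimed $1 - \epsilon_1$ without inflating the level beyond $m_l$. Once the thresholds are chosen and the strict-positivity of $f$ at the base is secured, the level budget matches automatically by the Section \ref{KK} telescope, and Fact \ref{doublecount} transfers coverage from the lower level $m_{l_1} + w$ up to $m_l$, completing the induction.
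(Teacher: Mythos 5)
Your overall strategy is the right one and matches the paper's: run the peel‑off induction of Theorem~\ref{main4} and invoke Bell's theorem once at the base. But you place the base at $l\le l_0$, and this is exactly the wrong place. The paper starts the induction at $l=\lfloor 1/\epsilon_1\rfloor-1$, and that choice is load‑bearing, not cosmetic. At a round indexed by $l_k$, Lemma~\ref{DCL} (in its $\delta,c$ form) controls the bad-$W$ fraction by roughly $(1+\epsilon)^{-l_k}/\eta_k$ and the good-$W$ loss in $f$ is $\eta_k$; so keeping the coverage loss at that round below (say) $\epsilon_1$ divided by the number of rounds forces $\eta_k\gtrsim (1+\epsilon)^{-l_k}/\epsilon_1$. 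When $l_k$ is of order $l_0$ (a constant independent of $\epsilon_1$), this lower bound is $\gtrsim 1/\epsilon_1$, which is far larger than the entire $f$-budget of $1/2$. Symmetrically, the cumulative $f$-slack arriving at $l_0$ is $\approx(1+\epsilon/3)^{-l_0}$, a constant, so the family reaching your base has $f$ bounded below $1/2$ by a constant, and Bell's hypothesis $f(H)>1/2$ is simply not met; no ``stable version'' of Bell can absorb a constant defect. Your proposal's hope that adaptively tuned $\eta_k$ can keep ``both $\sum_k\eta_k$ and the cumulative bad-$W$ fraction below $\epsilon_1/4$'' is therefore unfulfillable once the chain reaches levels near $l_0$.

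The fix is what the paper actually does: stop the induction at $l_{\mathrm{base}}=\lfloor 1/\epsilon_1\rfloor-1$. Then both the cumulative $f$-slack and the cumulative bad-$W$ coverage loss are of order $(1+\epsilon/3)^{-1/\epsilon_1}$, exponentially small in $1/\epsilon_1$ and hence $\ll\epsilon_1$, so the standard fixed thresholds $(1+\epsilon/3)^{-l}$ of Theorem~\ref{main4} work verbatim with $2/3$ replaced by $1-\epsilon_1$. The same choice also explains the constant $96$: Bell applied to an $l'$-bounded family with $l'\le \lfloor 1/\epsilon_1\rfloor-1$ gives level $\lfloor 48pN\log l' + 48pN\log(1/\epsilon_1)\rfloor \le \lfloor 96\,pN\log(1/\epsilon_1)\rfloor$. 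In short: the base of the induction must scale as $1/\epsilon_1$, not be fixed at $l_0$; with your base the budgets do not close and the argument fails.
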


\begin{proof}[Proof of Theorem \ref{main5} ] We start the induction at $l = \lfloor  \frac{1}{\epsilon_1} \rfloor-1$. This base case is covered by 
Bell's theorem and results in the term $ 96 p N \log \left( \frac{1}{\epsilon_1} \right) $. When $l \ge  \lfloor \frac{1}{\epsilon_1} \rfloor -1$, we follow the proof of Theorem \ref{main4} with $1-\epsilon_1$ replacing $\frac{2}{3}$ and obtain the remaining term $LpN \log (l+1)$. 
\end{proof}

\vskip2mm 
\noindent {\bf Acknowledgements.} This research is partially supported by Simon Foundation award SFI-MPS-SFM-00006506 and NSF grant AWD 0010308. We thank J. Park for comments and pointing out reference \cite{Bell}.



\begin{thebibliography}{99}

\bibitem{Bell}  T. Bell, The Park–Pham theorem with optimal convergence rate. \textit{Electronic Journal of Combinatorics},  (2) \textbf{30} (2023), 2-25. doi:10.37236/11600.
\bibitem{KK1} J. Kahn and G. Kalai, Thresholds and expectation thresholds, \textit{Combin. Probab. Comput}. \textbf{16} (2007), 495-502.
doi:10.4007/annals.2021.194.2.2.
\bibitem{KP} K. Frankston, J. Kahn, B. Narayanan, J. Park, Thresholds versus fractional expectation-thresholds. \textit{Ann. Mathematics} \textbf{194} (2021), 475-495. doi:10.4007/annals.2021.194.2.2.
\bibitem{Lovett} R. Alweiss, L. Schachar, K. Wu and J. Zhang, Improved bounds for the sunflower lemma, \textit{Ann. of Math}. (2) \textbf{194} (2021), no 3, 795-815. doi:10.1145/3357713.3384234.
\bibitem{PP} J. Park, H. Pham, A Proof of the Kahn-Kalai conjecture, \textit{Journal of the American Mathematical Society} (1) \textbf{37} (2024), 235-243. doi:10.1090/jams/1028.

\end{thebibliography}
\end{document}